\definecolor{c1}{rgb}{0,0,1}
\definecolor{c2}{rgb}{0,0.3,0.9}
\definecolor{c3}{rgb}{0.3,0.9}
\def\XXint#1#2#3{{\setbox0=\hbox{$#1{#2#3}{\int}$ }
\vcenter{\hbox{$#2#3$ }}\kern-.6\wd0}}
\theoremstyle{plain}
\newtheorem{theorem}{Theorem}[section]
\theoremstyle{definition}
\theoremstyle{lemma}
\newtheorem{lemma}[theorem]{Lemma}
\theoremstyle{Remark}
\theoremstyle{proposition}
\theoremstyle{corollary}
\theoremstyle{example}
\theoremstyle{assumption}
\begin{document}
\pagestyle{empty}

\title{Hypocoercivity for the non-linear semiconductor Boltzmann equation}

\author{Marlies Pirner$^\ast$, Gayrat Toshpulatov\thanks{Institut f\"ur Analysis und Numerik, Fachbereich Mathematik
und Informatik der Universit\"at M\"unster, Orl\'eans-Ring 10, 48149 M\"unster, Germany, {\tt marlies.pirner@uni-muenster.de, gayrat.toshpulatov@uni-muenster.de}}}
\maketitle

%%%%%%%%%%%%%%%%%%%%%%%%%%%%%%%%%%%%%%%%%%%%%%%%%%%%
\pagestyle{plain}
%\textcolor{purple}{Changes are in purple!}
\begin{abstract}
A kinetic model for \textcolor{black}{semiconductor devices} is considered on a flat torus. We prove exponential decay to equilibrium  for this non-linear kinetic model by hypocoercivity estimates. This seems to be  the first hypocoercivity result for this nonlinear kinetic equation for semiconductor devices without smallness assumptions. The analysis benefits from uniform bounds of the solution in terms of the equilibrium velocity distribution. %\textcolor{cyan}{maybe something like this. Feel free to change if you want}
\end{abstract}
%%%%%%%%%%%%%%%%%%%%%%%%%%%%%%%%%%%%%%%%%%%%%%%%%%%
%there should be an intruduction
%%%%%%%%%%%%%%%%%%%%%%%%%%%%%%%%%%%%%%%%%%%%%%%%%%%
%\newpage
\textcolor{black}{\begin{small}\textbf{Keywords:} Kinetic theory of gases, semiconductor Boltzmann equation, long time
behavior, convergence to equilibrium, hypocoercivity, Lyapunov functional.\\
\textbf{2020 Mathematics Subject Classification:} 35Q20, 35B40, 35Q81, 35Q82, 82C40.
\end{small}}
\section{Introduction}
In semi-classical kinetic description, the statistical evolution of particles (electrons or holes) in semiconductor devices can be described by 
the following spatially inhomogeneous Boltzmann equation \cite{Book1, Book2, Book3, Pop.}
\begin{equation*}
\begin{cases}
\partial_t f+\nabla_{v}E(v)\cdot \nabla_x f=Q(f)\\
f(0,x,v)=f_0(x,v).
\end{cases}
\end{equation*}
Here the variables $t\geq 0$ and  $x \in \mathbb{T}^d$   stand for time and  position, respectively. The variable $v$ stands for wave vector and it belongs to a domain $B\subset \mathbb{R}^d$ which is called \emph{Brillouin zone}. The unknown $f(t,x,v)\geq 0$ describes the phase space distribution function of particles. The operator $\partial_t+\nabla_vE(v)\cdot \nabla_x$ describes transport, while $Q$ is a collision operator given by
\begin{equation*}
Q(f)=\int_{B}\sigma(v,v')\left[M(1-f)f'-M'(1-f')f\right]dv',
\end{equation*}
where $\displaystyle M=M(v)=\frac{1}{(2\pi k_BTm/{\hbar}^2)^{d/2}}e^{-\frac{|E(v)|^2}{k_BT}},$ $M'=M(v'),$ $f'=f(t,x,v').$ The (positive) physical constants denoted by $k_B,$ $T,$ $m$ and $\hbar$ are respectively the Boltzmann constant, the temperature, the particle mass, and the Planck constant.   $\sigma(v,v')$ is a given non-negative symmetric  function (i.e. $\sigma(v,v')=\sigma(v',v)$) and the product $\sigma(v,v')M(v)$ presents  the probability for a particle to change its wave vector $v$ into
another $v'$ during an interaction. 
 The function $E(v)$ describes the energy of particles and its gradient $\nabla_v E(v)$ describes the velocity of particles.

In this paper we consider the \emph{parabolic band approximation}  \cite{Book1, Pop., Book2} which consist in taking
$$E(v)=\frac{\hbar}{2m}|v|^2,\, \, \, \, B=\mathbb{R}^d.$$
This means it is assumed that the particles under consideration have energies close to an extremum of an energy band which therefore can be approximated by a paraboloid. 
For simplicity, we set all physical constants to unity: $k_B=T=m=\hbar=1.$ This is not a restriction since one can check that our results hold without this condition. Therefore, we shall consider the normalized equation
\begin{equation}\label{Eq}
\begin{cases}
\partial_t f+v\cdot \nabla_x f=Q(f), \, \, \, t>0,\, x\in \mathbb{T}^d,\, v\in \mathbb{R}^d,\\
f(0,x,v)=f_0(x,v),
\end{cases}
\end{equation}
with \begin{equation*}
Q(f)=\int_{\mathbb{R}^d}\sigma(v,v')\left[M(1-f)f'-M'(1-f')f\right]dv', \, \, \,  M(v)=\frac{1}{(2\pi )^{d/2}}e^{-\frac{|v|^2}{2}}.
\end{equation*} 
 
Equation \eqref{Eq} has several properties following standard physical considerations. 
 The collision operator $Q$ \textit{acts only on the velocity $v,$} which reflects the physical fact that collisions are localized in space. The factors $(1-f)$ and $(1-f')$ appearing in $Q$ take into account the Pauli exclusion principle. The distribution function is supposed to satisfy the bound 
\begin{equation}\label{01}
0\leq f\leq 1.
\end{equation}

Whenever $f(t,x,v)$ is a (well-behaved) solution of \eqref{Eq}, one has  \textit{global conservation of mass} \cite[Lemma 2.1]{P.S}
\begin{equation*}
\int_{\mathbb{R}^d}f(t,x,v)dvdx=\int_{\mathbb{R}^d}f_0(x,v)dvdx, \, \, \, \, \forall\, t\geq 0.
\end{equation*}
However, there is no other conservation law.

Equation \eqref{Eq} is \emph{dissipative} in the sense that the following entropy functionals  decrease under the time-evolution of $f$ (see \cite{Neu.Schmeis}): let $\chi=\chi(z), \, \, z \in (0,1)$ be an arbitrary increasing function and $S_{\chi}=S_{\chi}(z,v)$ be a function satisfying $$ \frac{\partial S_{\chi}(z,v)}{\partial z}=\chi\left(\frac{z}{M(v)(1-z)}\right).$$  For functions $f$ satisfying $0< f<1$ ($f$ is not necessarily the solution), we define  a functional  
\begin{equation}\label{Hch}\mathrm{H_{\chi}}[f]\colonequals \int_{\mathbb{T}^d}\int_{\mathbb{R}^d}S_{\chi}(f,v)dvdx.
\end{equation} 
If we have the existence of a solution $f\in (0,1)$ which is regular enough, one can check that, for all $t>0,$\begin{align}\label{dt H2}
\frac{d}{dt}\mathrm{H_{\chi}}[f(t)]=-\frac{1}{2}\int_{\mathbb{T}^d}\int_{\mathbb{R}^d}\int_{\mathbb{R}^d}\sigma(v,v')MM'(1-f)(1-f')(F-F')(\chi(F)-\chi(F'))dv'dvdx,
\end{align}
where $\displaystyle F\colonequals \frac{f}{M(1-f)}.$
Sine $\chi$ is an increasing function, we have $$(F-F')(\chi(F)-\chi(F'))\geq 0,$$ hence
$$\frac{d}{dt}\mathrm{H_{\chi}}[f(t)]\leq 0, \, \, \, \forall\, t>0.$$
 This means that, for any increasing function $\chi,$ the functional  $\mathrm{H_{\chi}}[f(t)]$ is an entropy for \eqref{Eq}.  If  $\chi$ is chosen strictly increasing, then the entropy dissipation functional  (the integral on the right hand side of \eqref{dt H2})  vanishes only if $\displaystyle \frac{f}{M(1-f)}$ does not depend on $v.$ This means 
\begin{equation}\label{loc}
f=\frac{\kappa(t,x) M(v)}{1+\kappa(t,x)M(v)}
\end{equation}
for some function $\kappa=\kappa(t,x).$ Hence, any function  $f$ in the form of \eqref{loc} is a \emph{local equilibrium} for \eqref{Eq}. For such $f $ the left hand side of \eqref{Eq} vanishes if $\kappa$ is constant. Hence, the unique  \emph{global equilibrium} for \eqref{Eq} is 
\begin{equation}\label{eqilib}
f_{\infty}(v)=\frac{\kappa_{\infty}M(v)}{1+\kappa_{\infty}M(v)},
\end{equation}
where $\kappa_{\infty}>0$ is determined by mass conservation $$\displaystyle \int_{\mathbb{T}^d}\int_{\mathbb{R}^d}f_{0}(x,v)dvdx=|\mathbb{T}^d|\int_{\mathbb{R}^d}f_{\infty}(v)dv.$$

On the basis of the decay of $\mathrm{H_{\chi}}[f(t)],$ one can conjecture that  $f(t)$ converges to the global equilibrium  $f_{\infty}$ as $t\to \infty.$   Thus, it is a basic problem to prove (or disprove) this convergence and to estimate the convergence rate. Our goal is to study the long time behavior of the  solution $f(t),$  and to prove that $f(t)$ converges exponentially to $f_{\infty}.$ We are interested in the study of rates of convergence and we want to derive constructive bounds for this convergence, because explicit and constructive estimates are essential for applications in physics (e.g. equilibration process, numerical simulations).

There are many works concerning the semiconductor Boltzmann equation \eqref{Eq}. For derivation and applications we refer the books \cite{Book1, Book2, Book3}. Existence and uniqueness of a global distributional solution to \eqref{Eq} was obtained in \cite{P.S, Neu.Schmeis}, where the authors used fixed point and maximum principle arguments.    \textcolor{black}{We mention existence and uniqueness results \cite{Exist1, Exist2, Pop., Exist3} for the semiconductor Boltzmann-Poisson equation.  There are many studies on the diffusion approximation  of \eqref{Eq} and   the asymptotics leads to a drift-diffusion equation, see \cite{Asym3, Asym5, Asym2, Asym4, Asym1, P.S}.}

Concerning the long time behavior of solutions, there are a few results. A polynomial rate of convergence to equilibrium has been obtained in \cite{Neu.Schmeis}. The proof based on the “entropy-entropy-dissipation” method \cite{D.V1, D.V, V} and uniform regularity bounds on the solution. Using the $H^1$ hypocoercivity method and assuming that the initial data $f_0$ is close to equilibrium, an exponential decay result was obtained in \cite{NeuMou}.

In this paper, we shall improve these previous results. We prove exponential decay of the solution  to global equilibrium based on the $L^2$-hypocoercivity approach. 
For spatially homogeneous kinetic equations it is well-known that explicit estimates on convergence to equilibrium can be obtained by the direct study of the relative entropy and its  dissipation functional. With the help of so-called Sobolev inequalities one can obtain a Gr\"onwall inequality for the relative entropy, which implies convergence of the solution to the global equilibrium. Unfortunately,  this idea can not be used for spatially inhomogeneous kinetic equations. The reason is that the collision operator acts only on the velocity variable  $v,$ which causes to loose information about $x-$direction in the dissipation functional. To deal with this problem, in  recent years, many new methods, so called hypocoercivity methods, are introduced to study the long-time behavior of spatially inhomogeneous kinetic equations.
The challenge of hypocoercivity is to understand the interplay between the collision operator that provides dissipativity in the velocity variable and the transport one which is conservative, in order to obtain global dissipativity for the whole problem. Largely motivated by the book of Villani \cite{V}, the literature on hypocoercivity has been growing considerably, with most of the approaches based on the
construction of suitable Lyapunov functionals. The $H^1$-based approach initiated in \cite{NeuMou} and expanded in \cite{V} is strongly motivated by
the theory of hypoellipticity. Recently it has been extended for certain model problems to prove sharp decay rates \cite{Achleitner2015,Arnold2014, AT1}.
The $L^2$ approach of \cite{DMS} has been strongly motivated by H\'erau \cite{Herau}. These methods have been applied to many different models, see e.g. \cite{AT2, Max, FPS, LP}

The main motivation of this work is to provide an extension of the hypocoercivity results for the model \eqref{Eq}. We prove exponential decay to equilibrium in $L^2$ without the close-to-equilibrium assumption on the initial data.
Motivated by the recent result \cite{FPS}, we construct a Lyapunov functional to \eqref{Eq} using the  $L^2$-hypocoercivity method \cite{DMS, FPS, BHR, Max}. Our goal is to show that this functional is equivalent to the square of the $L^2-$norm and satisfies a Gr\"onwal inequality, which implies  exponential decay of the solution to the global equilibrium. This functional depends on the projection of  the solution to the space of local equilibriums. In general, for semi-classical kinetic equations this projection operator  is nonlinear (see \eqref{pr} below). This makes our analysis challenging.

The results of this work provide an extension of the hypocoercivity results of \cite{ NeuMou, Neu.Schmeis}, since we do not require the close-to-equilibrium assumption on the initial data or
uniform regularity bounds on the solution. 

%\textcolor{blue}{And here maybe one or two sentences on the difficulties and the techniques. I think we should cite \cite{FPS} and \cite{BHR} in this context.}

The organization of this paper is as follows. In Section 2, we state the main results. In Section 3, we construct a Lyapunov functional and establish some estimates  for the entropy dissipation, the collision operator, and the projection operator.  We  present the proof of our result  in Section 4. 

\section{Main results}
We first mention that global existence and uniqueness of a distributional solution follows from a combination of \cite[Theorem 2.1]{Neu.Schmeis} and \cite[Theorem 2.1]{P.S}: 
\begin{theorem}\label{existence}
\textcolor{black}{Assume there exist constants $\kappa_->0,$  $ \kappa_+>0,$ $\sigma_{-}>0,$ and $\sigma_{+}>0$ such that 
\begin{equation*}
    \sigma_{-}\leq \sigma(v,v')\leq \sigma_{+}, \, \, \forall\, v,\, v'\in \mathbb{R}^d
\end{equation*}
and 
\begin{equation*}
\frac{\kappa_{-} M(v)}{1+\kappa_{-} M(v)}\leq f_0(x,v)\leq \frac{\kappa_{+} M(v)}{1+\kappa_{+} M(v)}, \, \, \forall\, x \in \mathbb{T}^d, \,  \forall\, v\in \mathbb{R}^d.
\end{equation*}}
Then there is a unique distributional solution $f\in C([0,\infty), L^1(\mathbb{T}^d\times \mathbb{R}^d))$ to \eqref{Eq} satisfying the same bounds
\begin{equation}\label{bound}
\frac{\kappa_{-} M(v)}{1+\kappa_{-} M(v)}\leq f(t,x,v)\leq \frac{\kappa_{+} M(v)}{1+\kappa_{+} M(v)} 
\end{equation}
for all $t\geq 0,$ $x \in \mathbb{T}^d$ and $v\in \mathbb{R}^d.$ 
\end{theorem}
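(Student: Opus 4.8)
The plan is to combine a Banach fixed-point construction on short time intervals with a comparison (maximum) principle that traps the solution in the invariant region $[f_-,f_+]$, where $f_\pm(v)\colonequals\frac{\kappa_\pm M(v)}{1+\kappa_\pm M(v)}$; the uniform bounds then rule out blow-up and give a global solution, while uniqueness follows from a Gr\"onwall estimate. First I would rewrite the collision operator in relaxation form. Expanding
$$Q(f)=M(1-f)\,\Sigma_1[f]-f\,\Sigma_2[f],\qquad \Sigma_1[f](v)\colonequals\int_{\mathbb{R}^d}\sigma(v,v')f'\,dv',\quad \Sigma_2[f](v)\colonequals\int_{\mathbb{R}^d}\sigma(v,v')M'(1-f')\,dv',$$
and setting $g[f]\colonequals M\,\Sigma_1[f]$ and $\lambda[f]\colonequals M\,\Sigma_1[f]+\Sigma_2[f]$, equation \eqref{Eq} becomes a transport equation with nonnegative absorption rate and source,
$$\partial_t f+v\cdot\nabla_x f+\lambda[f]\,f=g[f].$$
On the region $0\le f\le \kappa_+ M$, the bounds $\sigma_-\le\sigma\le\sigma_+$ together with $\int_{\mathbb{R}^d}M\,dv=1$ and the Gaussian decay of $M$ give $0\le g[f],\lambda[f]\le C(\kappa_+,\sigma_+)$, so all coefficients are uniformly bounded.

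Next I would freeze the coefficients and set up a Picard iteration: given $f^n$ valued in $[f_-,f_+]$, define $f^{n+1}$ as the mild solution along the characteristics $s\mapsto x-(t-s)v$ of the linear equation $\partial_t f^{n+1}+v\cdot\nabla_x f^{n+1}+\lambda[f^n]f^{n+1}=g[f^n]$ with datum $f_0$, given explicitly by Duhamel's formula. Since $\|g[f]-g[\tilde f]\|_{L^1}\le\sigma_+\|f-\tilde f\|_{L^1}$ and similarly for $\lambda$, the map $f^n\mapsto f^{n+1}$ is a contraction on $C([0,\tau_*];L^1(\mathbb{T}^d\times\mathbb{R}^d))$ for $\tau_*$ small, with a rate independent of the data on the invariant region; this produces a local-in-time solution.

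The heart of the argument, and the step I expect to be the main obstacle, is the invariant-region principle: showing that $f^n\in[f_-,f_+]$ forces $f^{n+1}\in[f_-,f_+]$. The key is that $f_\pm$ are exact equilibria of $Q$, since $F\colonequals\frac{f}{M(1-f)}\equiv\kappa_\pm$ is constant and $M(1-f)f'-M'(1-f')f=MM'(1-f)(1-f')(F'-F)$ then vanishes. Concretely, using $\frac{M(1-f_\pm)}{f_\pm}=\frac{1}{\kappa_\pm}$, the supersolution inequality $g[f^n]\le\lambda[f^n]f_+$ reduces to $\Sigma_2[f^n]\ge\frac{1}{\kappa_+}\Sigma_1[f^n]$, whose integrand is nonnegative exactly when $f^n{}'\le\frac{\kappa_+ M'}{1+\kappa_+ M'}=f_+(v')$; thus $f^n\le f_+$ makes the stationary profile $f_+$ a supersolution of the frozen linear equation. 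Feeding $g[f^n]\le\lambda[f^n]f_+$ and $f_0\le f_+$ into the Duhamel formula and using $e^{-\int_0^t\lambda}+\int_0^t\lambda[f^n](\tau)\,e^{-\int_\tau^t\lambda}\,d\tau=1$ yields $f^{n+1}\le f_+$; the symmetric computation with $\kappa_-$ gives $f^n\ge f_-\Rightarrow f^{n+1}\ge f_-$. The delicate points are that the domain $\mathbb{R}^d$ is unbounded, so convergence of the coefficient integrals must be checked (guaranteed by $f\le\kappa_+ M$ and the decay of $M$), and that the nonlinearity is not sign-definite, so the comparison has to be run through the frozen linear problem rather than the full equation.

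Finally, because the region $[f_-,f_+]$ is preserved and the coefficients stay uniformly bounded, no norm can blow up, so the local solution reiterates over steps of fixed length to all $t\ge0$ with the bounds \eqref{bound} holding at every time. Uniqueness follows by writing the equation for the difference $f-\tilde f$ of two solutions, estimating $\|g[f]-g[\tilde f]\|$ and $\|\lambda[f]f-\lambda[\tilde f]\tilde f\|$ by $\|f-\tilde f\|$ on the invariant region, and applying Gr\"onwall's lemma; the regularity $f\in C([0,\infty),L^1(\mathbb{T}^d\times\mathbb{R}^d))$ is read off from the Duhamel representation. This is precisely the content obtained by combining \cite[Theorem 2.1]{Neu.Schmeis}, which supplies the global distributional solution and its entropy structure, with \cite[Theorem 2.1]{P.S}, which supplies the comparison principle and the $L^1$ well-posedness.
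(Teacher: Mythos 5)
Your proposal is correct, and it is genuinely more self-contained than what the paper does: the paper's entire proof of Theorem \ref{existence} is a citation, observing that for \emph{constant} $\sigma$ the statement is contained in \cite[Theorem 2.1]{Neu.Schmeis} and \cite[Theorem 2.1]{P.S}, together with the assertion that those proofs ``can be easily generalized'' when $\sigma$ is only bounded between positive constants. What you write out --- the relaxation form $Q(f)=g[f]-\lambda[f]f$, Picard iteration along characteristics with frozen coefficients, and the invariant-region step showing that $f_\pm$ are super-/subsolutions of the frozen linear problem via $M(1-f_\pm)/f_\pm=1/\kappa_\pm$ --- is precisely the fixed-point/maximum-principle machinery of those references (which the paper's introduction itself describes in those terms), carried out directly for variable $\sigma\in[\sigma_-,\sigma_+]$. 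Your key computations check out: the equivalence of $g[f^n]\le\lambda[f^n]f_+$ with the pointwise sign condition $f^n\le f_+$ under the integral, and the Duhamel identity $e^{-\int_0^t\lambda}+\int_0^t\lambda\,e^{-\int_\tau^t\lambda}\,d\tau=1$ that closes the comparison. In effect your argument substantiates the paper's unproved generalization claim, since $\sigma$ enters only through upper bounds and Lipschitz constants of $g[\cdot]$ and $\lambda[\cdot]$ (note that $\sigma_-$ is never needed for existence, only later for the entropy dissipation estimates), so nothing uses constancy of $\sigma$. The trade-off is clear: the paper buys brevity by deferring to the literature, while your route makes the theorem verifiable without consulting the references. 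One point to tighten in a full write-up: uniqueness is asserted in the class of \emph{distributional} solutions, so before running Gr\"onwall you must show that any distributional solution taking values in $[f_-,f_+]$ coincides with the mild (Duhamel) solution along characteristics --- standard for transport equations with bounded coefficients, but it deserves a sentence.
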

\begin{proof} 
       When $\sigma$ is constant, the proof follows by existence results in \cite[Theorem 2.1]{Neu.Schmeis} and \cite[Theorem 2.1]{P.S}.  This proof can be easily generalized if $\sigma$ is bounded from below and above by positive constants.
\end{proof}
We introduce a weighted $L^2-$space 
\begin{equation*}
L^2(\mathbb{R}^d, M)\colonequals \left\{g :\mathbb{R}^d \to \mathbb{R}: g \text{ is measurable and } \, \, \int_{\mathbb{T}^d}\int_{\mathbb{R}^d}\frac{g^2}{M}dvdx< \infty\right\}
\end{equation*}
with the scalar product
 \begin{equation*}
\langle g_1, g_2\rangle\colonequals \int_{\mathbb{T}^d}\int_{\mathbb{R}^d}\frac{g_1g_2}{M}dvdx, \, \, \, \, \, \, g_1,g_2 \in L^2(\mathbb{R}^d, M) 
\end{equation*}
and the induced norm $||\cdot||.$ Our main result is the following:
\begin{theorem}\label{main th}
\textcolor{black}{Assume there exist constants $\kappa_->0,$  $ \kappa_+>0,$ $\sigma_{-}>0,$ and $\sigma_{+}>0$ such that 
\begin{equation*}
    \sigma_{-}\leq \sigma(v,v')\leq \sigma_{+} 
\end{equation*}
\text{for all} $v,\, v'\in \mathbb{R}^d$ and 
\begin{equation*}
\frac{\kappa_{-} M(v)}{1+\kappa_{-} M(v)}\leq f_0(x,v)\leq \frac{\kappa_{+} M(v)}{1+\kappa_{+} M(v)}
\end{equation*} \text{for all } $x \in \mathbb{T}^d, \,   \, v\in \mathbb{R}^d.$ Then there exist  explicitly computable constants $\lambda, c>0$  such the solution $f$ to \eqref{Eq} satisfies
\begin{equation}\label{lamda}
||f(t)-f_{\infty}||\leq c e^{-\lambda t}||f_0-f_{\infty}||
\end{equation}
for all $t\geq 0.$} 
\end{theorem}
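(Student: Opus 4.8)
The plan is to follow the $L^2$-hypocoercivity method of Dolbeault--Mouhot--Schmeiser, adapted to the nonlinear collision operator by exploiting the uniform bounds \eqref{bound}. Writing $h := f - f_\infty$ and $T := v\cdot\nabla_x$, I first record the two structural facts that drive the argument: $T$ is antisymmetric for the inner product $\langle\cdot,\cdot\rangle$ (because $M$ is independent of $x$ and $\mathbb{T}^d$ has no boundary), and $f_\infty$, being $x$-independent and a collision equilibrium, lies in the kernel of both $T$ and $Q$, so that $\partial_t h + Th = Q(f_\infty + h)$ and $\frac12\frac{d}{dt}\|h\|^2 = \langle h, Q(f)\rangle$.

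Rather than differentiate $\|h\|^2$ directly (whose collision contribution carries an indefinite quadratic term), I would take as the coercive part of the Lyapunov functional the relative entropy $\mathcal{E}[f] := \mathrm{H}_{\chi}[f] - \mathrm{H}_{\chi}[f_\infty]$ with the linear choice $\chi(z) = z$. Since $\partial_z^2 S_\chi = 1/(M(1-f)^2)$ is pinched between two positive constants on the invariant region \eqref{bound}, a Taylor expansion together with mass conservation (which kills the linear term because $F_\infty \equiv \kappa_\infty$ is constant) shows that $\mathcal{E}[f]$ is equivalent to $\|h\|^2$. Its dissipation is exactly \eqref{dt H2}, and here \eqref{bound} again enters decisively: the Pauli factors $(1-f)(1-f')$ are bounded below by a positive constant, so writing $\phi := \frac{f}{M(1-f)} - \kappa_\infty$ I obtain the microscopic coercivity estimate $-\frac{d}{dt}\mathcal{E}[f] \ge c\int_{\mathbb{T}^d}\int_{\mathbb{R}^d}M\,(\phi - \langle\phi\rangle_M)^2\,dv\,dx$, where $\langle\phi\rangle_M$ is the velocity average against $M\,dv$. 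Through the two-sided bounded change of variables $h = M(1-f)(1-f_\infty)\phi$, this controls $\|(I-\Pi)h\|^2$, where $\Pi$ is the orthogonal projection onto the linearized local equilibria $\{\rho(x)\,M(1-f_\infty)^2\}$.

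For the macroscopic part I would prove $\|T\Pi h\|^2 \ge \lambda_M\|\Pi h\|^2$ on the mean-zero subspace: writing $\Pi h = \rho(x)M(1-f_\infty)^2$, the isotropy of the velocity weight reduces $\|T\Pi h\|^2$ to a constant multiple of $\int_{\mathbb{T}^d}|\nabla_x\rho|^2\,dx$, and since mass conservation forces $\int_{\mathbb{T}^d}\rho\,dx = 0$, the Poincar\'e inequality on the torus yields the claim. I would then introduce the standard auxiliary operator $A := (1 + (T\Pi)^*(T\Pi))^{-1}(T\Pi)^*$, defined through the linear operators $T$ and $\Pi$ so that $A$, $AT\Pi$ and $AT(I-\Pi)$ are bounded, and set $\mathcal{H}[f] := \mathcal{E}[f] + \delta\langle Ah, h\rangle$, which for $\delta$ small is equivalent to $\|h\|^2$.

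The conclusion follows by differentiating $\mathcal{H}$ along the flow. Microscopic coercivity controls the collision contribution, the term $-\delta\langle AT\Pi h, h\rangle$ supplies, through macroscopic coercivity, a negative multiple of $\|\Pi h\|^2$, and the remaining cross terms ($\langle AT(I-\Pi)h, h\rangle$, $\langle A Q(f), h\rangle$ and their transposes) are bounded by $C\delta\|(I-\Pi)h\|\,\|h\|$ and absorbed by choosing $\delta$ small; this yields $\frac{d}{dt}\mathcal{H} \le -\kappa\|h\|^2 \le -\kappa'\mathcal{H}$, and hence \eqref{lamda} by Gr\"onwall with $\lambda = \kappa'/2$. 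The main obstacle is precisely the nonlinearity: the collision operator and the exact projection onto local equilibria are nonlinear, so none of the error terms may be assumed small a priori. The decisive device is the invariant region \eqref{bound} from Theorem \ref{existence}, which pins the Pauli factors between positive constants; this makes the nonlinear collision operator two-sided comparable to its linearization, giving an explicit microscopic constant, and lets every nonlinear remainder in $\frac{d}{dt}\mathcal{H}$ be dominated by the good entropy-dissipation term. Turning these comparabilities into explicit constants, and in particular controlling the genuinely nonlinear discrepancy produced by the $v$-dependent weight $(1-f)(1-f_\infty)$ relating $h$ and $\phi$, is where the real work lies.
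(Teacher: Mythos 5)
Your overall architecture (entropy equivalent to $\|h\|^2$ plus a small auxiliary functional, microscopic/macroscopic coercivity, absorption of cross terms for small $\delta$) is the right one, but there is a genuine gap at the decisive step: microscopic coercivity with respect to your \emph{linear} projection $\Pi$ onto $\{\rho(x)M(1-f_\infty)^2\}$ is not just hard, it is \emph{false}. The entropy dissipation \eqref{dt H2} vanishes exactly on local equilibria $f=\kappa(t,x)M/(1+\kappa(t,x)M)$ with arbitrary non-constant $\kappa$, and for such $f$,
\[
h \;=\; f-f_\infty \;=\; \frac{(\kappa-\kappa_\infty)\,M}{(1+\kappa M)(1+\kappa_\infty M)},
\]
which for $\kappa(x)\neq\kappa_\infty$ is \emph{not} proportional in $v$ to $M(1-f_\infty)^2=M/(1+\kappa_\infty M)^2$; hence $\|(I-\Pi)h\|>0$ while the dissipation is zero, so no constant can make ``dissipation $\geq c\|(I-\Pi)h\|^2$'' true. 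Quantitatively, this is the term you flagged: writing $(1-f)(1-f_\infty)=(1-f_\infty)^2-h(1-f_\infty)$, the discrepancy between $h$ and $M(1-f_\infty)^2\langle\phi\rangle_M$ contains $M h(1-f_\infty)\langle\phi\rangle_M$, which is of order $(\kappa_+-\kappa_-)\,\|\Pi h\|$, i.e.\ proportional to the \emph{macroscopic} component that the dissipation does not see at all. Your proposed mechanism for handling it (domination by the entropy-dissipation term, thanks to the invariant region \eqref{bound}) therefore cannot work; the term can only be absorbed into the macroscopic coercivity term $-\delta\lambda_M\|\Pi h\|^2$, which forces a smallness condition on $\kappa_+-\kappa_-$, i.e.\ precisely the close-to-equilibrium hypothesis the theorem is designed to remove. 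The same defect re-enters in your cross terms: $\|Q(f)\|$ is controlled by the distance to the \emph{nonlinear} local-equilibrium manifold, not by $\|(I-\Pi)h\|$.

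This is exactly why the paper does not use the Dolbeault--Mouhot--Schmeiser operator $A=(1+(T\Pi)^*T\Pi)^{-1}(T\Pi)^*$: that construction requires $\Pi$ to be linear, and any linear projection is incompatible with the zero-dissipation states of this nonlinear model. The paper instead keeps the nonlinear projection \eqref{pr} (defined by matching densities), for which the dissipation genuinely controls $\|f-\Pi f\|^2$ (inequality \eqref{dt H<-c1}, from Neumann--Schmeiser), and replaces $\langle Ah,h\rangle$ by the moment-based coupling $\delta\int_{\mathbb{T}^d}\nabla_x\phi\cdot j\,dx$ with $-\Delta_x\phi=\rho-\rho_\infty$. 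Differentiating this term produces the macroscopic coercivity $-c_9\|\Pi f-f_\infty\|^2$ through the exact moment identities \eqref{vP=0}--\eqref{v_iv_jP} and the pointwise comparison estimates \eqref{rk}--\eqref{rr}, with all cross terms bounded by $\|f-\Pi f\|\,\|\Pi f-f_\infty\|$; everything stays compatible with the nonlinear structure because only moments of $f$, never a linear decomposition of $h$, enter the auxiliary functional. To salvage your route you would have to either replace your linear $\Pi$ by the nonlinear one (and then abandon or entirely rebuild $A$, which no longer makes sense), or accept a smallness assumption on $\kappa_+-\kappa_-$, which reduces the result to the known perturbative one.
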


%%%%%%%%%%%%%%%%%%%%%%%%%%%%%%%%%%%%%%%%%%%%%%%%%%%%%%%%%%%%%%%%%%%%%%%%%%%%%%%%%%%%%%%%%%%%%%%%%%%%%%%%%%%%%%%%%%%%%%%%%%%%%
\section{Intermediate results }
%%%%%%%%%%%%%%%%%%%%%%%%%%%%%%%%%%%%%

We define a projection operator 
\begin{equation}\label{pr}
\Pi f\colonequals\frac{\kappa(t,x)M(v)}{1+\kappa(t,x)M(v)}, \, \, \, f\in L^2(\mathbb{R}^d, M),
\end{equation}
where $\kappa(t,x)$ is chosen by the condition
\begin{equation*}
\int_{\mathbb{R}^d}\Pi fdv=\int_{\mathbb{R}^d}f(t,x,v)dv.
\end{equation*}
$\Pi f$ means the projections of  $f$ to the space of functions which are local equilibrium for \eqref{Eq}. We note that   $ \Pi $  is a non-linear operator. For $v=(v_1,...,v_d)^T\in \mathbb{R}^d,$ we have 
\begin{equation}\label{vP=0}
\int_{\mathbb{R}^d}v\Pi f dv=\int_{\mathbb{R}^d}\frac{v\kappa(t,x)M(v)}{1+\kappa(t,x)M(v)}dv=0,
\end{equation}
and 
\begin{equation}\label{v_iv_jP}
\int_{\mathbb{R}^d}v_iv_j\Pi f dv=\int_{\mathbb{R}^d}\frac{v_i v_j \kappa(t,x)M(v)}{1+\kappa(t,x)M(v)}dv=\delta_{ij}\int_{\mathbb{R}^d}\frac{v_1^2\kappa(t,x)M(v)}{1+\kappa(t,x)M(v)}dv,\,\, i,j\in\{1,...,d\}, 
\end{equation} 
where  $\delta_{ij}=\begin{cases}1,
\,\,\, \text{if}\,\, i=j\\
0, \,\,\, \text{if}\,\, i\neq j
\end{cases}$ is the Dirac delta function.
We  denote 
$$\rho_{\infty}\colonequals \int_{\mathbb{R}^d}f_{\infty}(v)dv,$$
$$\rho(t,x)\colonequals \int_{\mathbb{R}^d}f(t,x,v)dv,$$
$$j(t,x)\colonequals \int_{\mathbb{R}^d}vfdv.$$
Let $\phi$ be the solution of the Poisson equation
\begin{equation*}
-\Delta_x \phi(t,x)=\rho(t,x)-\rho_{\infty},\, \, \, \int_{\mathbb{T}^d} \phi \,dx=0, \, \, \, x\in \mathbb{T}^d, \, \, t\geq 0.
\end{equation*}
Next we consider a specific entropy functional $\mathrm{H_{\chi}}$ (see \eqref{Hch}): we choose $\chi=\ln (z/{\kappa}_{\infty}),$ then  $\mathrm{H_{\chi}}$ equals
 \begin{equation*}
\mathrm{H}[f]\colonequals \int_{\mathbb{T}^d}\int_{\mathbb{R}^d} \left(f \ln{\frac{f}{f_{\infty}}}+(1-f)\ln{\frac{1-f}{1-f_{\infty}}}\right)dvdx.
\end{equation*}
 Then we define a modified entropy functional
\begin{equation*}
\mathrm{E}[f]\colonequals \mathrm{H}[f]+\delta \int_{\mathbb{T}^d} \nabla_x \phi\cdot j dx,
\end{equation*}
 where $\delta>0$ to be chosen later. 
\begin{lemma} Let the assumptions of Theorem \ref{existence} hold and $f$ be the solution to \eqref{Eq}.
\begin{itemize}
\item[(i)] There is a constant $c_1>0$ such that 
%\begin{equation*}
%c_1||f-f_{\infty}||^2\leq \mathrm{H}(f)\leq c_2 ||f-f_{\infty}||^2
%\end{equation*}
%and 
\begin{equation}\label{dt H<-c1}
\frac{d}{dt}\mathrm{H}[f(t)]\leq -c_1 ||f-\Pi f||^2.
\end{equation}
\item[(ii)] $Q$ is a bounded operator in $L^2(\mathbb{R}^d, M)$ and there is a constant $c_2>0$ such that
\begin{equation}\label{|Q|}
||Q(f)||\leq c_2||f-\Pi f||. %, \, \, \text{}\, \, \, \forall\, f\in L^2(\mathbb{R}^d, M).
\end{equation}

\item[(iii)] There are constant $c_3>0,$ $c_4>0$ and $c_5>0$ such that 
\begin{equation}\label{rk}
c_3\frac{(\Pi f-f_{\infty})^2}{M^2}\leq (\kappa-\kappa_{\infty})(\rho-\rho_{\infty})
\end{equation}
and 
\begin{equation}\label{rr}
c_4\frac{(\Pi f-f_{\infty})^2}{M^2}\leq (\rho-\rho_{\infty})^2\leq c_5\frac{(\Pi f-f_{\infty})^2}{M^2}.
\end{equation}
\item[(iv)] If $\delta>0$ is small enough, there are constants $c_6>0$ and $c_7>0$ such that \begin{equation}\label{E equiv}c_6||f-f_{\infty}||^2\leq \mathrm{E}[f]\leq c_7 ||f-f_{\infty}||^2.
\end{equation}
\end{itemize}
\end{lemma}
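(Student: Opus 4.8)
The lemma has four parts, which I would treat in order, relying throughout on the uniform bounds \eqref{bound}. These are equivalent to $\kappa_-\le F\le\kappa_+$ for $F=f/(M(1-f))$ and also force $\kappa,\kappa_\infty\in[\kappa_-,\kappa_+]$; write $M_0:=\max_v M(v)=(2\pi)^{-d/2}$, so every denominator $(1+sM)$ with $s\in[\kappa_-,\kappa_+]$ stays in $[1,1+\kappa_+M_0]$. For (i), the plan is to insert $\chi(z)=\ln(z/\kappa_\infty)$ into \eqref{dt H2}, so that $\chi(F)-\chi(F')=\ln F-\ln F'$ and $-\tfrac{d}{dt}\mathrm H[f]$ equals $\tfrac12\iiint\sigma MM'(1-f)(1-f')(F-F')(\ln F-\ln F')\,dv'dvdx$. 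Using $\sigma\ge\sigma_-$, the lower bound $1-f\ge(1+\kappa_+M)^{-1}$, and the elementary estimate $(F-F')(\ln F-\ln F')\ge\kappa_+^{-1}(F-F')^2$ on $[\kappa_-,\kappa_+]$, this reduces (i) to a weighted Poincar\'e inequality $\iint\mu(v)\mu(v')(F-F')^2\,dv'dv\gtrsim\int M(F-\kappa)^2\,dv$ with weight $\mu(v)=M/(1+\kappa_+M)\simeq M$; the pointwise identity $\frac{f-\Pi f}{M}=\frac{F-\kappa}{(1+FM)(1+\kappa M)}$ then turns the right-hand side into a constant multiple of $\|f-\Pi f\|^2$, since its denominator is bounded. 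I expect this Poincar\'e step to be the main obstacle, and it is where the nonlinearity of $\Pi$ enters: the left-hand side only controls the variance of $F$ about its $\mu$-weighted mean, whereas $\kappa$ is defined implicitly by $\int\Pi f\,dv=\int f\,dv$. I would bridge this by rewriting that condition, via the mean value theorem applied to $s\mapsto sM/(1+sM)$, as $\int w(v)(F-\kappa)\,dv=0$ with $w(v)=M/(1+\xi M)^2$ and $\xi\in[\kappa_-,\kappa_+]$; since $w\simeq\mu\simeq M$ by \eqref{bound}, $\kappa$ is a weighted mean of $F$ comparable to the $\mu$-mean, and the discrepancy between the two means is absorbed into the variance, producing $c_1$.

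For (ii), the crucial observation is that $Q$ annihilates local equilibria: substituting $\Pi f=\kappa M/(1+\kappa M)$ makes the bracket in $Q$ vanish identically, so $Q(\Pi f)=0$. Setting $h=f-\Pi f$ and expanding the quadratic operator bilinearly gives $Q(f)=Q(f)-Q(\Pi f)=\int\sigma[(M(1-f)+M'f)h'-(M'(1-\Pi f')+M\Pi f')h]\,dv'$, where every coefficient is at most $M+M'$. Estimating each term by $\sigma\le\sigma_+$ and Cauchy--Schwarz in $L^2(\mathbb R^d,M)$, the bounds $f,\Pi f\le\kappa_+M$ and $M^2\le(2\pi)^{-d}$ supply exactly the Gaussian decay that makes the resulting weights integrable against $1/M$ (for instance $\int f^2/M\,dv\le\kappa_+^2$), and one reads off $\|Q(f)\|\le c_2\|h\|$; boundedness of $Q$ follows from the same computation.

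Part (iii) is purely algebraic. From $\Pi f-f_\infty=\frac{M(\kappa-\kappa_\infty)}{(1+\kappa M)(1+\kappa_\infty M)}$ one gets $\frac{(\Pi f-f_\infty)^2}{M^2}=\frac{(\kappa-\kappa_\infty)^2}{(1+\kappa M)^2(1+\kappa_\infty M)^2}$, and integrating in $v$ gives $\rho-\rho_\infty=A(\kappa-\kappa_\infty)$ with $A=\int\frac{M}{(1+\kappa M)(1+\kappa_\infty M)}\,dv$. Since $\kappa,\kappa_\infty\in[\kappa_-,\kappa_+]$, the factor $A$ lies between explicit positive constants $A_-,A_+$ and each denominator lies in $[1,(1+\kappa_+M_0)^2]$; comparing the three quantities $(\kappa-\kappa_\infty)^2$, $A(\kappa-\kappa_\infty)^2$, and $\frac{(\kappa-\kappa_\infty)^2}{(1+\kappa M)^2(1+\kappa_\infty M)^2}$ then yields \eqref{rk}--\eqref{rr} with, e.g., $c_3=A_-$, $c_4=A_-^2$, $c_5=(1+\kappa_+M_0)^4$.

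For (iv), I would first prove $\mathrm H[f]\simeq\|f-f_\infty\|^2$. Taylor-expanding the binary relative-entropy integrand gives $\mathrm H[f]=\tfrac12\iint\frac{(f-f_\infty)^2}{\xi(1-\xi)}\,dvdx$ for an intermediate value $\xi$ between $f$ and $f_\infty$, and \eqref{bound} yields $cM\le\xi(1-\xi)\le\kappa_+M$ uniformly, so $\mathrm H[f]$ is squeezed between constant multiples of $\|f-f_\infty\|^2$. It then remains to show the correction term is a controllable perturbation. Using $\int v\Pi f\,dv=0$ from \eqref{vP=0} I write $j=\int v(f-\Pi f)\,dv$, so $\|j\|_{L^2_x}\le\sqrt d\,\|f-\Pi f\|$; the Poisson equation and the Poincar\'e inequality on $\mathbb T^d$ give $\|\nabla_x\phi\|_{L^2_x}\le C\|\rho-\rho_\infty\|_{L^2_x}$, which (iii) bounds by $C\|\Pi f-f_\infty\|$. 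Finally both $\|f-\Pi f\|$ and $\|\Pi f-f_\infty\|$ are dominated by $\|f-f_\infty\|$ --- the latter because $(\kappa-\kappa_\infty)^2\lesssim(\rho-\rho_\infty)^2$ and, by Cauchy--Schwarz, $\int_{\mathbb T^d}(\rho-\rho_\infty)^2\,dx\le\|f-f_\infty\|^2$, the former by the triangle inequality --- so $|\delta\int_{\mathbb T^d}\nabla_x\phi\cdot j\,dx|\le\delta C\|f-f_\infty\|^2$. Choosing $\delta$ small enough that $\delta C$ is below the lower entropy-equivalence constant then preserves coercivity and gives \eqref{E equiv}.
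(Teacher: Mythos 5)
Your proposal is correct, but it is considerably more self-contained than the paper's own proof, which only argues parts (iii) and (iv) directly: for (i) and (ii) the paper simply cites \cite[Lemmas 3--4]{Neu.Schmeis}, and inside (iv) it also cites \cite[Lemma 1]{Neu.Schmeis} for the equivalence $\mathrm{H}[f]\simeq\|f-f_\infty\|^2$. You instead supply proofs of all three of these ingredients: for (i) you insert $\chi=\ln(z/\kappa_\infty)$ into \eqref{dt H2} and reduce to controlling $\int \mu\,(F-\kappa)^2\,dv$ with $\mu\simeq M$; note that what you call a weighted Poincar\'e inequality is really just the algebraic identity $\iint\mu\mu'(F-F')^2\,dv'dv=2\left(\int\mu \,dv\right)\int\mu\,(F-\bar F)^2\,dv$ for the $\mu$-mean $\bar F$, so no functional inequality is needed, and your bridge from $\bar F$ to the implicitly defined $\kappa$ (mean value theorem giving $\int w\,(F-\kappa)\,dv=0$ with $w\simeq M$, then Cauchy--Schwarz to absorb $(\bar F-\kappa)^2$ into the variance) is sound because \eqref{bound} keeps $F$, $\kappa$ and the mean-value point in $[\kappa_-,\kappa_+]$. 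For (ii), the observation $Q(\Pi f)=0$ plus the pointwise bound $f,\Pi f\le\kappa_+M$ to secure Gaussian decay of the kernel is a clean, correct replacement for the cited lemma; for the entropy equivalence in (iv), the second-order Taylor expansion with $\xi(1-\xi)\simeq M$ also checks out. Part (iii) coincides with the paper's algebra. In (iv) your route differs only cosmetically: you use \eqref{vP=0} to write $j=\int v(f-\Pi f)\,dv$ and then dominate both $\|f-\Pi f\|$ and $\|\Pi f-f_\infty\|$ by $\|f-f_\infty\|$, whereas the paper uses $\int v f_\infty\,dv=0$ to get $\|j\|_{L^2(\mathbb{T}^d)}\le\sqrt d\,\|f-f_\infty\|$ in one step. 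The payoff of your longer route is that all constants become explicit from $\kappa_\pm$, $\sigma_\pm$ and Gaussian moments, in line with the paper's stated aim of constructive estimates, at the cost of redoing work the paper outsources to \cite{Neu.Schmeis}.
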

\begin{proof}
The proofs of $(i)$ and $(ii)$ are given in \cite[Lemma 3--4]{Neu.Schmeis}.

$(iii)$ By the definition of $\Pi$ we have  
$$\kappa-\kappa_{\infty}=\frac{\Pi f-f_{\infty}}{M(1-\Pi f)(1-f_{\infty})}$$
and $$\rho-\rho_{\infty}=\int_{\mathbb{R}^d}(\Pi f-f_{\infty})dv=(\kappa-\kappa_{\infty})\int_{\mathbb{R}^d}\frac{Mdv}{(1+\kappa M)(1+\kappa_{\infty}M)}.$$
These equations imply 
$$(\rho-\rho_{\infty})(\kappa-\kappa_{\infty})=\frac{(\Pi f-f_{\infty})^2}{M^2(1-\Pi f)^2(1-f_{\infty})^2}\int_{\mathbb{R}^d}\frac{Mdv}{(1+\kappa M)(1+\kappa_{\infty}M)}$$
and 
$$(\rho-\rho_{\infty})^2=\frac{(\Pi f-f_{\infty})^2}{M^2(1-\Pi f)^2(1-f_{\infty})^2}\left(\int_{\mathbb{R}^d}\frac{Mdv}{(1+\kappa M)(1+\kappa_{\infty}M)}\right)^2.$$
Since $1-\Pi f,$  $1-f_{\infty}$ and $\displaystyle \int_{\mathbb{R}^d}\frac{Mdv}{(1+\kappa M)(1+\kappa_{\infty}M)}$ are bounded from below and above by positive constants,  the claimed inequalities hold.

$(iv)$  We estimate  
$$ \left|\int_{\mathbb{T}^d} \nabla_x \phi\cdot j dx\right|\leq ||\nabla_x \phi||_{L^2(\mathbb{T}^d)}||j||_{L^2(\mathbb{T}^d)}.$$
Because of the elliptic regularity $||\nabla_x \phi||_{L^2(\mathbb{T}^d)}$ is bounded (up to a constant) by $||\rho-\rho_{\infty}||_{L^2(\mathbb{T}^d)}.$ Moreover, the H\"older inequality provides $||\rho-\rho_{\infty}||_{L^2(\mathbb{T}^d)}\leq ||f-f_{\infty}||.$  
We use $\displaystyle \int_{\mathbb{R}^d }vf_{\infty}dv=0$ and the H\"older inequality 
\begin{align*}
||j(t)||_{L^2(\mathbb{T}^d)}&=\sqrt{\int_{\mathbb{T}^d}\left|\int_{\mathbb{R}^d}v(f-f_{\infty})dv \right|^2dx}\\
&\leq \sqrt{\int_{\mathbb{T}^d}\left(\int_{\mathbb{R}^d}|v|^2M(v)dv \right)\left(\int_{\mathbb{R}^d}\frac{(f-f_{\infty})^2}{M}dv\right) dx}\\
&\leq \sqrt{d}||f-f_{\infty}||. 
\end{align*}
These estimates show that there is a constant $c_8$ such that
$$ \left|\int_{\mathbb{T}^d} \nabla_x \phi\cdot j dx\right|\leq c_8||f-f_{\infty}||^2.$$
Therefore, we have
$$\mathrm{H}[f]-\delta c_8||f-f_{\infty}||^2\leq \mathrm{E}[f]\leq \mathrm{H}[f]+\delta c_8||f-f_{\infty}||^2.$$
It is proven in \cite[Lemma 1]{Neu.Schmeis} that $\mathrm{H}[f]$ is equivalent to $||f-f_{\infty}||^2,$ when $f$ satisfies the bounds \eqref{bound}.  Hence, if $\delta$ is small enough, $\mathrm{E}[f]$ is equivalent to $||f-f_{\infty}||^2.$
\end{proof}
%%%%%%%%%%%%%%%%%%%%%%%%%%%%%%%%%%%%%%%%%%%
%%%%%%%%%%%%%%%%%%%%%%%%%%%%%%%%%%%%%%%%%%%%%%%
 
\section{Proof of Theorem \ref{main th}}

\begin{proof}[Proof of Theorem \ref{main th}]
We consider the modified entropy functional \begin{equation*}
\mathrm{E}[f(t)]\colonequals \mathrm{H}[f(t)]+\delta \int_{\mathbb{T}^d} \nabla_x \phi\cdot j dx.
\end{equation*}
The time derivative of this functional equals
\begin{align}\label{dt E}
\frac{d}{dt}\mathrm{E}[f(t)]=\frac{d}{dt}\mathrm{H}[f(t)]+\delta \int_{\mathbb{T}^d} \partial_t\nabla_x \phi\cdot j dx+\delta \int_{\mathbb{T}^d} \nabla_x \phi\cdot \partial_t j dx
\end{align}
We estimate this time derivative in the following three steps.\\
\textbf{Step 1: Estimates on $\displaystyle \int_{\mathbb{T}^d} \partial_t\nabla_x \phi\cdot j dx.$ }\\
 By integrating \eqref{Eq} with respect to $v$ and using $\displaystyle \int_{\mathbb{R}^d} Q(v)dv=0,$ we obtain
\begin{equation*}
\partial_t \rho+\text{div}_xj=0.
\end{equation*}
This shows that $-\partial_t \Delta \phi=-\text{div}_xj.$ Integrating by parts and using the H\"older inequality, we estimate 
$$||\partial_t \nabla_x \phi(t)||_{L^2(\mathbb{T}^d)}^2=-\int_{\mathbb{T}^d}\partial_t  \phi ~\text{div}_xjdx=\int_{\mathbb{T}^d}\partial_t  \nabla_x \phi \cdot jdx\leq ||j(t)||_{L^2(\mathbb{T}^d)}||\partial_t \nabla_x \phi(t)||_{L^2(\mathbb{T}^d)}. $$
This shows  
$$||\partial_t \nabla_x \phi(t)||_{L^2(\mathbb{T}^d)}\leq ||j(t)||_{L^2(\mathbb{T}^d)}. $$ 
 We use \eqref{vP=0} and the H\"older inequality 
$$||j(t)||^2_{L^2(\mathbb{T}^d)}=\int_{\mathbb{T}^d}\left|\int_{\mathbb{R}^d}v(f-\Pi f)dv \right|^2dx\leq d\int_{\mathbb{T}^d}\int_{\mathbb{R}^d}\frac{(f-\Pi f)^2}{M}dv dx. $$
The H\"older inequality and the last two estimates show 
\begin{equation}\label{tphi j}\int_{\mathbb{T}^d} \partial_t\nabla_x \phi\cdot j dx\leq d ||f-\Pi f||^2.
\end{equation}\\
\textbf{Step 2: Estimates on $\displaystyle \int_{\mathbb{T}^d} \nabla_x \phi\cdot \partial_t j dx.$}\\
 Multiplying \eqref{Eq} by $v,$  then integrating with respect to $v,$ we obtain
\begin{align*}
\partial_t j=-\int_{\mathbb{R}^d}v(v\cdot \nabla_x f)dv+\int_{\mathbb{R}^d}vQ(f)dv.
\end{align*}
Now, we use \eqref{v_iv_jP} to write 
 \begin{align*}
 \partial_t j&=-\int_{\mathbb{R}^d}v(v\cdot \nabla_x (\Pi f-f_{\infty}))dv-\int_{\mathbb{R}^d}v(v\cdot \nabla_x (f-\Pi f))dv+\int_{\mathbb{R}^d}vQ(f)dv\\
 &=-\nabla_x\left(\int_{\mathbb{R}^d}v_1^2  (\Pi f-f_{\infty})dv\right)-\int_{\mathbb{R}^d}v(v\cdot \nabla_x (f-\Pi f))dv+\int_{\mathbb{R}^d}vQ(f)dv.
 \end{align*}
 We use this equation to compute 
 \begin{align}\label{tj}
 \int_{\mathbb{T}^d} \nabla_x \phi\cdot \partial_t j dx=&-\int_{\mathbb{T}^d} \nabla_x \phi\cdot \nabla_x\left(\int_{\mathbb{R}^d}v_1^2  (\Pi f-f_{\infty})dv \right)dx\nonumber\\
 &-\int_{\mathbb{T}^d}\int_{\mathbb{R}^d}(\nabla_x \phi\cdot v)(v\cdot \nabla_x (f-\Pi f))dvdx\nonumber\\
 &+\int_{\mathbb{T}^d}\int_{\mathbb{R}^d}(\nabla_x \phi\cdot v)Q(f)dvdx.
 \end{align}
 We now estimate each term in this equation. First we have 
 \begin{multline*}
 -\int_{\mathbb{T}^d} \nabla_x \phi\cdot \nabla_x\left(\int_{\mathbb{R}^d}v_1^2  (\Pi f-f_{\infty})dv \right)dx\\=-\int_{\mathbb{T}^d}(\rho-\rho_{\infty})(\kappa-\kappa_{\infty})\left(\int_{\mathbb{R}^d} \frac{v_1^2M(v)dv}{(1+\kappa M(v))(1+\kappa_{\infty}M(v)) }  dv\right)dx. 
 \end{multline*}
 Since the integral $\displaystyle \int_{\mathbb{R}^d} \frac{v_1^2M(v)dv}{(1+\kappa M(v))(1+\kappa_{\infty}M(v)) }  dv$ is bounded from below by a positive constant, \eqref{rk} implies 
 \begin{equation}\label{tj1}
 -\int_{\mathbb{T}^d} \nabla_x \phi\cdot \nabla_x\left(\int_{\mathbb{R}^d}v_1^2  (\Pi f-f_{\infty})dv \right)dx\leq -c_9||\Pi f-f_{\infty}||^2
 \end{equation}
 for some constant $c_9>0.$
 For the second term in \eqref{tj} we integrate by parts and use the H\"older inequality
 \begin{align*}
 -\int_{\mathbb{T}^d}\int_{\mathbb{R}^d}(\nabla_x \phi\cdot v)(v\cdot \nabla_x (f-\Pi f))dvdx&=\int_{\mathbb{T}^d}\int_{\mathbb{R}^d} v^T\frac{\partial^2 \phi}{\partial x^2} v (f-\Pi f)dvdx\\
 & \leq ||f-\Pi f||\sqrt{\int_{\mathbb{T}^d}\int_{\mathbb{R}^d} |v|^2\left|\frac{\partial^2 \phi}{\partial x^2}\right|^2 Mdvdx}\\
 &=\sqrt{d}||f-\Pi f||\left|\left|\frac{\partial^2 \phi}{\partial x^2}\right|\right|_{L^2(\mathbb{T}^d)}.
 \end{align*}
 $\left|\left|\frac{\partial^2 \phi}{\partial x^2}\right|\right|_{L^2(\mathbb{T}^d)}$ is bounded (up to a constant) by $||\rho-\rho_{\infty}||_{L^2(\mathbb{R}^d)}$ because of the elliptic regularity. Moreover, \eqref{rr} implies that   $||\rho-\rho_{\infty}||_{L^2(\mathbb{T}^d)}$ is bounded (up to a constant) by $||\Pi f-f_{\infty}||.$ Hence, there is a constant $c_{10}>0$ such that 
 \begin{equation}\label{tj2}
  -\int_{\mathbb{T}^d}\int_{\mathbb{R}^d}(\nabla_x \phi\cdot v)(v\cdot \nabla_x (f-\Pi f))dvdx\leq c_{10} || f-\Pi f||\,||\Pi f-f_{\infty}||.
 \end{equation}
 We estimate the last term in \eqref{tj} by using the H\"older inequality and \eqref{|Q|}
 \begin{align*}
 \int_{\mathbb{T}^d}\int_{\mathbb{R}^d}(\nabla_x \phi\cdot v)Q(f)dvdx & \leq ||Q||\sqrt{\int_{\mathbb{T}^d}\int_{\mathbb{R}^d}|\nabla_x \phi|^2|v|^2M(v)dvdx}\\
 &\leq \sqrt{d}c_2||f-\Pi f||\,||\nabla_x \phi||_{L^2(\mathbb{T}^d)}.
 \end{align*}
 Similarly, $||\nabla_x \phi||_{L^2(\mathbb{T}^d)}$  is bounded (up to a constant) by $||\rho-\rho_{\infty}||_{L^2(\mathbb{R}^d)}$ because of the elliptic regularity. Moreover, \eqref{rr} implies that   $||\rho-\rho_{\infty}||_{L^2(\mathbb{T}^d)}$ is bounded (up to a constant) by $||\Pi f-f_{\infty}||.$ Hence, there is a constant $c_{11}>0$ such that 
 \begin{equation}\label{tj3}
 \int_{\mathbb{T}^d}\int_{\mathbb{R}^d}(\nabla_x \phi\cdot v)Q(f)dvdx  \leq c_{11}|| f-\Pi f||\,||\Pi f-f_{\infty}||.
 \end{equation}
 \eqref{tj}, \eqref{tj1}, \eqref{tj2} and \eqref{tj3} imply 
 \begin{equation}\label{phitj}\int_{\mathbb{T}^d} \nabla_x \phi\cdot \partial_t j dx\leq -c_9||\Pi f-f_{\infty}||^2+(c_{10}+c_{11}) || f-\Pi f||\,||\Pi f-f_{\infty}||.
 \end{equation}\\
 \textbf{Step 3: Gr\"onwall's inequality. }\\
 We now summarize all estimates. \eqref{dt E}, \eqref{dt H<-c1}, \eqref{tphi j} and \eqref{phitj} yield
 $$\frac{d}{dt}\mathrm{E}[f(t)]\leq -(c_1-\delta d)||f-\Pi f||^2-\delta c_9||\Pi f-f_{\infty}||^2+\delta(c_{10}+c_{11}) || f-\Pi f||\,||\Pi f-f_{\infty}||.$$
 One can check that if $\delta>0$ is small enough, there is a constant $c_{12}=c_{12}(\delta)>0$ such that 
 $$\frac{d}{dt}\mathrm{E}[f(t)]\leq -c_{12}(||f-\Pi f||^2+||\Pi f-f_{\infty}||^2)\leq  -\frac{c_{12}}{2}||f-f_{\infty}||^2.$$
 The first inequality in \eqref{E equiv} implies
 $$\frac{d}{dt}\mathrm{E}[f(t)]\leq-\frac{c_{12}}{2c_6}\mathrm{E}[f(t)].$$
 Then, Gr\"onwall's inequality implies 
 $$\mathrm{E}[f(t)]\leq e^{-\frac{c_{12}}{2c_6}t}\mathrm{E}[f_0], \, \, \, \forall\, t\geq 0.$$
 This inequality and \eqref{E equiv} provide \eqref{lamda} with $c\colonequals \sqrt{\frac{c_7}{c_6}}$ and $\lambda\colonequals \frac{c_{12}}{4c_6}.$
\end{proof}

 \section{Acknowledgements:}
 Marlies Pirner and Gayrat Toshpulatov was funded by the Deutsche Forschungsgemeinschaft (DFG, German Research Foundation) under Germany’s Excellence Strategy EXC 2044-390685587, Mathematics Münster: Dynamics–Geometry–Structure. In addition, Marlies Pirner was funded by the German Science Foundation DFG (grant no. PI 1501/2-1).
%%%%%%%%%%%%%%%%%%%%%%

%%%%%%%%%%%%%%%%%%%%%%%%%%%%%%%%%%%%%%%%%%%%%%%%%%%%%%%%%%%%%%%%%%%%%%%%%%%%%%%%%%%%%%%%%%%%%%%%%%%%%%%%%%%%%%%%%%%%%%%%%%%%%%%%%%%%%%%%%%%%%%%%%%%%%%%%%%%%%%%%%%%%%%%%%%%%%%%%%%%%%%%%%%%%%%%%%%%%%%%%%%%%%%%%%%%%%%%%%%%%%%%%%%%%%%%%%%%%%%%%%%%%%%%%%%%%%%%%%%%%%%%%%%%%%%%%%%%%%

{}

\end{document}